\newtheorem{theorem}{Theorem}[section]
\theoremstyle{remark}
\newcommand{\C}{\mathbb{C}} 		 % The complex numbers.
\newcommand{\dbar}{\bar{\partial}}	 % The D-Bar operator
\newcommand{\dbars}{\bar{\partial}^{*}}	 % The D-Bar operator
\newcommand{\leb}{\mathcal{L}}       % The Lebesgue space
\newcommand{\dom}{\text{dom}}
\numberwithin{equation}{section}
\begin{document}

\title[Compactness of the $\dbar$-Neumann operator]{Compactness of the $\dbar$-Neumann operator on the intersection of two domains}
\author{Mustafa Ayy\"{u}r\"{u}}
\author{Emil J. Straube}

\address{Rutgers Business School, Master of Quantitative Finance Program,
Newark, NJ, 07102}
\address{Department of Mathematics, Texas A\&M University, College Station, TX, 77843}
\email{mustafa.ayyuru@rutgers.edu}
\email{straube@math.tamu.edu}

\thanks{2000 \emph{Mathematics Subject Classification}: 32W05, 35N15}
\keywords{$\overline{\partial}$--Neumann operator, compactness, pseudoconvex domains, intersections of domains}
\thanks{Research supported in part by NSF grant DMS 0758534.}

\date{August 25, 2014}

\begin{abstract}
Assume that $\Omega_{1}$ and $\Omega_{2}$ are two smooth bounded pseudoconvex domains in $\mathbb{C}^{2}$ that intersect (real) transversely, and that $\Omega_{1} \cap \Omega_{2}$ is a domain (i.e. is connected). If the $\overline{\partial}$--Neumann operators on $\Omega_{1}$ and on $\Omega_{2}$ are compact, then so is the $\overline{\partial}$--Neumann operator on $\Omega_{1} \cap \Omega_{2}$. The corresponding result holds for the $\overline{\partial}$--Neumann operators on $(0,n-1)$--forms on domains in $\mathbb{C}^{n}$. 
\end{abstract}

\maketitle

\begin{center}
\emph{Respectfully dedicated to the memory of M.~Salah Baouendi.}
\end{center}

\section{Introduction}
Let $\Omega$ be a bounded domain in $\C^n$. For $0\le q\le n$, the space of $(0,q)$-forms $u=\sideset{}{_{}^{\prime}}\sum_{|J|=q}u_{J} d\bar{z}_{J}$, where $u_{J}\in \leb^2(\Omega)$ for each strictly increasing $q$-tuple $J$, is denoted by $\leb_{(0,q)}^2(\Omega)$. The inner product on $\mathcal{L}^{2}_{(0,q)}(\Omega)$ is given by
\begin{equation}\label{innerprod}
(u,v)_{\leb_{(0,q)}^2(\Omega)} :=\sideset{}{_{}^{\prime}}\sum_{|J|=q} \int_{\Omega}u_{J}\overline{v_{J}} dV \;,
\end{equation}
where the prime denotes summation over strictly increasing $q$--tuples.
The Cauchy-Riemann operator $\dbar_{q}$ acting on $(0,q)$-forms  is defined as follows:
\begin{equation}\label{dbar}
\dbar_{q}u=\dbar_{q}\left(\sideset{}{_{}^{\prime}}\sum_{|J|=q}u_{J} d\bar{z}_{J}\right)=\sum_{k=1}^{n}\sideset{}{_{}^{\prime}}\sum_{|J|=q}\frac{\partial u_{J}}{\partial \bar{z}_{k}} d\bar{z}_{k}\wedge d\bar{z}_{J}.
\end{equation}
Here, the derivatives are taken in the distributional sense and the domain of $\dbar_{q}$, which we denote by $\dom(\dbar_{q})$, consists of those $(0,q)$-forms with $\dbar_{q}u\in \leb_{(0,q+1)}^2(\Omega)$. Then $\overline{\partial}_{q+1}\overline{\partial}_{q} = 0$; the resulting complex is referred to as 
the $\dbar$ (or Dolbeault)--complex. $\dbar_{q}$ is a linear, densely defined, closed operator, and as such has a Hilbert space adjoint $\dbars_{q}:\leb_{(0,q+1)}^2(\Omega)\to \leb_{(0,q)}^2(\Omega)$. The complex Laplacian is then the unbounded operator $\Box_{q}:=\dbar_{q-1}\dbars_{q-1}+\dbars_{q}\dbar_{q}$, with domain so that the compositions are defined (this imposes a boundary condition not only on a form $u$, but on $\overline{\partial}u$ as well; these are the $\overline{\partial}$--Neumann boundary conditions). It is a deep result of H\"{o}rmander (\cite{MR0179443}) that when $\Omega$ is bounded and pseudoconvex, $\Box_{q}$ is injective and onto, and so has a bounded inverse. This inverse is the $\overline{\partial}$--Neumann operator $N_{q}$. Regularity properties of $N_{q}$ in various function spaces are of great importance, both in several complex variables and in partial differential equations.
We refer the reader to \cite{MR2275654, MR2603659} for details and historical developments.

One of the properties of interest is compactness of $N_{q}$. That is, $N_{q}$ is not just bounded on $\mathcal{L}^{2}_{(0,q)}(\Omega)$, but is compact. Compactness is interesting for a number of reasons: it implies  $\mathcal{L}^{2}$--Sobolev estimates for $N_{q}$ (with all their ramifications) (\cite{MR0181815}), and there are applications to the Fredholm theory of Toeplitz operators (\cite{MR0461588, MR1427681}), to existence or non--existence of Henkin--Ramirez type kernels for solving $\overline{\partial}$ (\cite{MR1842025}), and to certain $C^{*}$ algebras naturally associated to a domain in $\mathbb{C}^{n}$ (\cite{MR1128605}). Details may be found in \cite{MR1912737, MR2603659}, and in their references.

In this note, we address the question of compactness of the $\overline{\partial}$--Neumann operator on the intersection of two domains, given that the $\overline{\partial}$--Neumann operator on each domain is compact. In addition to its intrinsic interest, the question serves as a test for how well compactness of the $\overline{\partial}$--Neumann operator is understood, in particular with respect to identifying `the obstruction to compactness'. Namely, if the obstruction is absent from the boundary of both domains, it ought to be absent from the boundary of the intersection. For example, for a convex domain it is known that $N_{q}$ is compact if and only if the boundary does not contain complex varieties of dimension at least $q$ (\cite{MR1659575, MR1912737, MR2603659}); that is, these varieties in the boundary form the obstruction to compactness. Clearly, if the boundaries of both domains do not contain these varieties, then neither does the boundary of the intersection. Likewise, if 
both boundaries repel `$q$--dimensional analytic structure' in the sense of the potential theoretic sufficient condition known as Property $(P_{q})$ (\cite{MR740870, MR1912737, MR2603659}; see \cite{MR1934357} for a variant), then so does the boundary of the intersection (and the $\overline{\partial}$--Neumann operator on the intersection is compact). However, in general, Property $(P_{q})$ is not known to be equivalent to compactness, so that its failure does not (more precisely, is not known to) constitute an obstruction to compactness. 

We mention that it is straightforward to identify abstractly the obstruction to compactness of the $\overline{\partial}$--Neumann operator via the zero set of the ideal of compactness multipliers introduced in \cite{MR2513533}: the $\overline{\partial}$--Neumann operator is compact if and only if this common zero set is empty. However, so far, it is only possible to identify this set in cases where compactness is understood (i.e. convex domains and Hartogs domains in $\mathbb{C}^{2}$, see \cite{MR2513533}). In particular, we do not understand how the ideal of compactness multipliers on the intersection of two domains arises from the respective ideals on the domains. 

The material presented here is mainly from the PhD dissertation \cite{Mustafa} of the first author, written under the supervision of the second author at Texas A\&M University.

\section{The $\overline{\partial}$--Neumann operators on the intersection of two domains}

Assume now that $\Omega_{1}$and $\Omega_{2}$ are two bounded pseudoconvex domains in $\mathbb{C}^{n}$ whose intersection $\Omega_{1} \cap \Omega_{2}$ is also a domain (i.e. is connected), and whose $\overline{\partial}$--Neumann operators $N_{q}^{\Omega_{1}}$ and $N_{q}^{\Omega_{2}}$ are compact, for some $q$ with $1\leq q \leq n$. Because compactness of $N_{q}$ is a local property (\cite{MR2603659}, Proposition 4.4), one can obtain compactness results for $N_{q}^{\Omega_{1}\cap\Omega_{2}}$ by imposing conditions on $b\Omega_{1}\cap b\Omega_{2}$. In particular, if one assumes that $b\Omega_{1}\cap b\Omega_{2}$ satisfies Property $(P_{q})$ mentioned above, or the variant $(\widetilde{P}_{q})$ from \cite{MR1934357}, then $N_{q}^{\Omega_{1}\cap\Omega_{2}}$ is compact (\cite{Mustafa}, Theorem 4.1.2). We do not pursue this direction here; instead, we focus on the question discussed in the introduction: obtain compactness on the intersection assuming only compactness on the two domains.

The following result, although formulated for domains in $\mathbb{C}^{n}$, is most relevant in dimension $n=2$, as $q=1$ is the case of most interest.

\begin{theorem}\label{main}
Let $\Omega_{1}$ and $\Omega_{2}$ be smooth bounded pseudoconvex domains in $\C^n$ which intersect (real) transversely, and assume that $\Omega_{1}\cap\Omega_{2}$ is a domain (i.e. is connected). If the $\dbar$-Neumann operators $N_{(n-1)}^{\Omega_{1}}$ and $N_{(n-1)}^{\Omega_{2}}$ are compact, then so is $N_{(n-1)}^{\Omega_{1}\cap\Omega_{2}}$.
\end{theorem}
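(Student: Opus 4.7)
My plan is to verify the standard compactness estimate
\begin{equation*}
\|u\|^2 \le \varepsilon\bigl(\|\dbar u\|^2+\|\dbars u\|^2\bigr)+C_\varepsilon\|u\|_{-1}^2
\end{equation*}
for all $u\in\dom(\dbar_{n-1})\cap\dom(\dbars_{n-2})$ on $\Omega_1\cap\Omega_2$; this characterizes compactness of $N_{n-1}^{\Omega_1\cap\Omega_2}$. Since compactness is a local property (Proposition~4.4 of \cite{MR2603659}), it is enough to establish a local analogue of the estimate near each boundary point and then patch with a smooth squared partition of unity, using Rellich to absorb the zero-order commutators $[\dbar,\chi]$ and $[\dbars,\chi]$ into the $C_\varepsilon\|u\|_{-1}^2$ term.

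The boundary $b(\Omega_1\cap\Omega_2)$ splits into three strata: smooth points of $b\Omega_1\cap\Omega_2$, smooth points of $b\Omega_2\cap\Omega_1$, and the edge $E:=b\Omega_1\cap b\Omega_2$. For a point in the first stratum (symmetrically the second), I would take a cutoff $\chi$ with $\mathrm{supp}(\chi)\cap b\Omega_2=\emptyset$; the zero-extension of $\chi u$ to $\Omega_1$ vanishes in a neighborhood of $b\Omega_2\cap\Omega_1$, so it lies in $\dom(\dbar_{n-1})\cap\dom(\dbars_{n-2})$ for $\Omega_1$, and the compactness estimate for $N_{n-1}^{\Omega_1}$ applied to it yields the local estimate after controlling the commutators coming from $\chi$.

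The edge is the main obstacle, and it is where real transversality and the restriction to $(0,n-1)$-forms both enter. Near an edge point, transversality makes $(\rho_1,\rho_2)$ a submersion onto $\R^2$, so $E$ is a smooth $(2n-2)$-dimensional submanifold with well-defined normal directions to each $b\Omega_j$. For $u$ supported near the edge, my plan is to construct a controlled extension $\tilde u$ of $u$ to $\Omega_1$ lying in $\dom(\dbar_{n-1})\cap\dom(\dbars_{n-2})$ for $\Omega_1$, with graph norm bounded by that of $u$, and a symmetric extension to $\Omega_2$. The construction uses a Friedrichs-type reflection across $b\Omega_2\cap\Omega_1$: decompose $u$ in a tubular neighborhood of $b\Omega_2$ along the direction $\dbar\rho_2$, continue the $\dbar\rho_2$-component (which vanishes on $b\Omega_2$ by the $\dbars$-condition on $b\Omega_2$ satisfied by $u$ in $\dom(\dbars_{n-2})$ on $\Omega_1\cap\Omega_2$) oddly across $b\Omega_2$, and continue the complementary components evenly. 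The $(0,n-1)$-form hypothesis is essential here: on $(0,n-1)$-forms the $\dbars$-condition $u\lrcorner\dbar\rho=0$ cuts out a one-dimensional subspace at each boundary point, so the decomposition is canonical, whereas for smaller $q$ the condition is multi-dimensional and no such clean reflection is available. Compactness of $N_{n-1}^{\Omega_1}$ applied to $\tilde u$ then produces the local estimate at the edge.

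I expect the hard part to be proving that this reflected extension genuinely belongs to $\dom(\dbar_{n-1})\cap\dom(\dbars_{n-2})$ on $\Omega_1$, that is, that no distributional jumps appear across $b\Omega_2$ when one computes $\dbar\tilde u$, that the $\dbars$-boundary condition on the reflected portion of $b\Omega_1$ is preserved (this requires using the transversality of $b\Omega_1$ and $b\Omega_2$ at the edge so that the reflection straightening $b\Omega_2$ acts benignly on $\rho_1$), and that the graph norm of $\tilde u$ is bounded by the graph norm of $u$ with a constant depending only on the transverse geometry. Once this local edge estimate is in hand, combining the three local estimates via the partition of unity and absorbing the lower-order terms gives the global compactness estimate, hence compactness of $N_{n-1}^{\Omega_1\cap\Omega_2}$.
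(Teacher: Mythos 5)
Your overall framework (the compactness estimate, locality of compactness, zero--extension of $\chi u$ near the smooth strata of $b(\Omega_1\cap\Omega_2)$) is sound, but the step you yourself flag as the hard part --- the reflected extension across $b\Omega_2$ near the edge --- is not just hard, it fails. The obstruction is that the graph norm $\|u\|^2+\|\dbar u\|^2+\|\dbars u\|^2$ on $\Omega_1\cap\Omega_2$ controls only the anti--holomorphic derivatives $\partial u_J/\partial\bar z_k$ of the coefficients (via the Kohn--Morrey--H\"ormander identity); it does not control the holomorphic derivatives, nor any $\leb^2$ trace of $u$ on $b\Omega_2\cap\Omega_1$. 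A reflection across the real hypersurface $b\Omega_2$ conjugates $\partial/\partial\bar z_k$ into a combination of anti--holomorphic \emph{and} holomorphic derivatives of the original coefficients, so $\dbar\tilde u$ on the reflected side is a first--order expression in $u$ that the graph norm does not dominate. Concretely, a $\dbar$--closed $(0,n-1)$--form whose coefficient is a holomorphic $\leb^2$ function that is not in $W^1$ near the edge has finite graph norm, yet no even/odd real reflection of it across $b\Omega_2$ lies in $\dom(\dbar)$ on $\Omega_1$. (If an extension bounded in graph norm existed for all such $u$, one would essentially be asserting an a priori trace/regularity gain for $\dom(\dbar)\cap\dom(\dbars)$ that is known to be false on general pseudoconvex domains.) The observation that the $\dbars$--condition leaves a one--dimensional tangential component at top degree does not repair this, and there is a secondary unaddressed point: after reflecting you would also need $\|\tilde u\|_{W^{-1}(\Omega_1)}\lesssim\|u\|_{W^{-1}(\Omega_1\cap\Omega_2)}$ to recover the local compactness estimate.

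The paper takes a different route that avoids extending non--closed forms altogether. It reduces compactness of $N_{(n-1)}^{\Omega_1\cap\Omega_2}$ to exhibiting one compact solution operator for $\dbar$ on $\dbar$--closed $(0,n-1)$--forms, and solves a Cousin--type problem: $\alpha=(1-\varphi)\alpha+\varphi\alpha$ with a partition of unity satisfying $|\nabla\varphi|\lesssim 1/d_S$, corrected by $\gamma$ solving $\dbar\gamma=\dbar\varphi\wedge\alpha$ on $\Omega_1\cup\Omega_2$. Because $\dbar\varphi\wedge\alpha$ is a $(0,n)$--form, this last equation is the Dirichlet problem for $\Delta$, and the elliptic gain $\Delta^{-1}:W^{-1}\rightarrow W^1_0$ exactly absorbs the $1/d_S$ blow--up of $\nabla\varphi$; the corrected pieces $\beta_j$ are then $\dbar$--closed on $\Omega_j$ with controlled $\leb^2$ norms, and the compact canonical solution operators $\dbars N_{(n-1)}^{\Omega_j}$ finish the argument. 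This is where the restriction to $(0,n-1)$--forms genuinely enters --- the correction lives at top degree --- whereas in your plan the top--degree hypothesis is invoked for a reflection that cannot be carried out.
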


\begin{proof}
For economy of notation, we set $\Omega := \Omega_{1}\cap\Omega_{2}$, and $S:= b\Omega_{1}\cap b\Omega_{2}$. $S$ is a smooth oriented submanifold of $\mathbb{C}^{n}$ of real codimension two, and  $\Omega$ is smooth except at the points of $S$, where it is only Lipschitz. We also omit subscripts from $\overline{\partial}$, as the form level $q$ is clear from the context.

We first note that compactness of $N_{(n-1)}$ is equivalent to the compactness of the canonical solution operators $\dbar^{*}N_{(n-1)}$ and $\dbar^{*}N_{n}$ (see Proposition $4.2$ in \cite{MR2603659}; compare also Lemma 3 in \cite{MR3095048}). $\dbar^{*}N_{n}$ is always compact, because $N_{n}$ maps $W^{-1}_{(0,n)}(\Omega)$ continuously to $W^{1}_{(0,n)}(\Omega)$. That is because for $(0,n)$--forms, the $\overline{\partial}$--Neumann problem reduces to the Dirichlet problem (see for example the discussion following estimate (2.94) on p. 36 of \cite{MR2603659}), and $\Delta: W^{1}_{0}(\Omega) \rightarrow W^{-1}(\Omega)$ is an isomorphism (see for instance Theorem 23.1 in \cite{MR0447753}).
$\dbar^{*}N_{n}$ thus maps $W^{-1}_{(0,n)}(\Omega)$ continuously to $\mathcal{L}^{2}_{(0,n-1)}(\Omega)$, hence is compact as an operator from $\mathcal{L}^{2}_{(0,n)}(\Omega) \rightarrow \mathcal{L}^{2}_{(0,n-1)}(\Omega)$ (since $\mathcal{L}^{2}_{(0,n)}(\Omega)$ embeds compactly into $W^{-1}_{(0,n)}(\Omega)$). Therefore, to show that $N_{(n-1)}$ is compact, it suffices to show that $\dbar^{*}N_{(n-1)}$ is compact. This, in turn, will follow if we can show that there is some compact solution operator for $\dbar$: composing it with the  projection onto $\ker(\dbar)^{\perp}$ (which preserves compactness) gives $\overline{\partial}^{*}N_{(n-1)}$. That is, it suffices 
to find a linear compact operator $T: \leb_{(0,n-1)}^2(\Omega)\cap \ker(\dbar)\to \leb_{(0,n-2)}^2(\Omega)$ such that $\dbar Tu=u$ for all $u\in \ker(\dbar)\cap \leb_{(0,n-1)}^2{(\Omega)}$.

The strategy for constructing $T$ is to write a form $\alpha\in \ker(\dbar_{n-1})\cap \leb_{(0,n-1)}^2{(\Omega)}$ as
\begin{equation}\label{alphaCousin}
\alpha= \beta_{1}|_{\Omega}+ \beta_{2}|_{\Omega} \;, \;\;\beta_{j}\in \ker(\dbar_{n-1})\cap \leb_{(0,n-1)}^2{(\Omega_{j})}, \, j=1,2\; ,
\end{equation}
with
\begin{equation}\label{cousinestimate}
\|\beta_{1}\|_{\mathcal{L}^{2}_{(0,n-1)}(\Omega_{1})} + \|\beta_{2}\|_{\mathcal{L}^{2}_{(0,n-1)}(\Omega_{2})} \lesssim \|\alpha\|_{\mathcal{L}^{2}_{(0,n-1)}(\Omega)} \;,
\end{equation}
and $\beta_{1}$ and $\beta_{2}$ depending linearly on $\alpha$. Then setting $T\alpha := \overline{\partial}^{*}N_{(n-1)}^{\Omega_{1}}\beta_{1} + \overline{\partial}^{*}N_{(n-1)}^{\Omega_{2}}\beta_{2}$ on $\Omega$ gives the desired compact solution operator $T$. We use here that compactness of $N_{(n-1)}^{\Omega_{1}}$ and $N_{(n-1)}^{\Omega_{2}}$ imply compactness of the canonical solution operators $\overline{\partial}^{*}N_{(n-1)}^{\Omega_{1}}$ and $\overline{\partial}^{*}N_{(n-1)}^{\Omega_{2}}$, respectively (see again \cite{MR2603659}, Proposition 4.2).

The situation in \eqref{alphaCousin} is reminiscent of that in a Cousin problem. We proceed accordingly; extra care is needed because we need to control $\mathcal{L}^{2}$--norms.
Because $\Omega_{1}$ and $\Omega_{2}$ intersect transversely, we can choose a partition of unity $\{\varphi, 1-\varphi\}$ of $\Omega_{1}\cup\Omega_{2}$, subordinate to the cover $\{\Omega_{1}, \Omega_{2}\}$, 
with $|\nabla\varphi(z)|\lesssim 1/d_{S}(z)$; here, $d_{S}$ denotes the distance to $S$. We will give details in the appendix (section \ref{app}). Now set
\begin{equation}\label{betadef}
 \widetilde{\beta_{1}} := (1-\varphi)\alpha\;, \;\;\;\widetilde{\beta_{2}} := \varphi\alpha \; .                                                                                               
\end{equation}
We can think of $\widetilde{\beta_{1}}$ and $\widetilde{\beta_{2}}$ as forms in $\mathcal{L}^{2}_{(0,n-1)}(\Omega_{1})$ and $\mathcal{L}^{2}_{(0,n-1)}(\Omega_{2})$, respectively, by setting them zero outside $\Omega$. Of course, the forms need not be $\overline{\partial}$--closed. We have
\begin{equation}\label{dbarbeta}
\overline{\partial}\widetilde{\beta_{1}} = -(\overline{\partial}\varphi\wedge\alpha) \; ,\;\overline{\partial}\widetilde{\beta_{2}} = \overline{\partial}\varphi\wedge\alpha \; ,
\end{equation}
on $\Omega_{1}$ and $\Omega_{2}$ respectively. Now $\overline{\partial}\varphi\wedge\alpha$ is a form on $\Omega_{1}\cup\Omega_{2}$, by setting it equal to zero outside the support of $\nabla\varphi$. If we can write it as $\overline{\partial}\gamma$ on $\Omega_{1}\cup\Omega_{2}$, then setting $\beta_{1} := \widetilde{\beta_{1}}+\gamma$ on $\Omega_{1}$, and $\beta_{2} := \widetilde{\beta_{2}}-\gamma$ on $\Omega_{2}$, produces forms that satisfy \eqref{alphaCousin} (as the two corrections will cancel in the sum). Of course, we also need to preserve the estimates \eqref{cousinestimate} (which are satisfied by $\widetilde{\beta_{1}}$ and $\widetilde{\beta_{2}}$).

Because $\overline{\partial}\varphi\wedge\alpha$ is a $(0,n)$--form, we can solve the equation $\overline{\partial}\gamma = \overline{\partial}\varphi\wedge\alpha$ explicitly on $\Omega_{1}\cup\Omega_{2}$, using again that for $(0,n)$--forms, the $\overline{\partial}$--Neumann problem reduces to the Dirichlet problem for the Laplacian. Define $g$ by $\overline{\partial}\varphi\wedge\alpha = g\,d\overline{z_{1}} \wedge \cdots \wedge d\overline{z_{n}}$. We use again that $\Delta: W^{1}_{0}(\Omega_{1}\cup\Omega_{2}) \rightarrow W^{-1}(\Omega_{1}\cup\Omega_{2})$ is an isomorphism. If we set 
\begin{equation}\label{gamma}
\gamma = \overline{\partial}^{*}\left(-4(\Delta^{-1}g)d\overline{z_{1}}\wedge \cdots \wedge d\overline{z_{n}}\right) \; ,
\end{equation}
then, on $\Omega_{1}\cup\Omega_{2}$,
\begin{equation}\label{gamma1}
\overline{\partial}\gamma = \overline{\partial}\overline{\partial}^{*}\left(-4(\Delta^{-1}g)d\overline{z_{1}}\wedge \cdots \wedge d\overline{z_{n}}\right) = \left(\Delta(\Delta^{-1}g)\right)d\overline{z_{1}}\wedge \cdots \wedge d\overline{z_{n}} = \overline{\partial}\varphi\wedge\alpha \; .
\end{equation}
We have used here that $\Box$ (which equals $\overline{\partial}\overline{\partial}^{*}$ on $(0,n)$--forms) acts diagonally as $(-1/4)\Delta$, see for example \cite{MR2603659}, Lemma 2.11. From \eqref{gamma} we immediately obtain
\begin{equation}\label{gammaest}
\|\gamma\|_{\mathcal{L}^{2}_{(0,n-1)}(\Omega_{1}\cup\Omega_{2})} \lesssim \|\Delta^{-1}g\|_{W^{1}_{0}(\Omega_{1}\cup\Omega_{2})} \lesssim \|g\|_{W^{-1}(\Omega_{1}\cup\Omega_{2})} \simeq \|\overline{\partial}\varphi\wedge\alpha\|_{W^{-1}_{(0,n)}(\Omega_{1}\cup\Omega_{2})} \; . 
\end{equation}
In order to estimate the right hand side of \eqref{gammaest}, we recall that $|\nabla\varphi|\lesssim 1/d_{S} \lesssim 1/d_{b(\Omega_{1}\cup\Omega_{2})}$, where $d_{b(\Omega_{1}\cup\Omega_{2})}$ denotes the distance to the boundary of $\Omega_{1}\cup\Omega_{2}$. This implies that multiplication by a derivative of $\varphi$ maps $W^{1}_{0}(\Omega_{1}\cup\Omega_{2})$ continuously into $\mathcal{L}^{2}(\Omega_{1}\cup\Omega_{2})$ (see for example \cite{MR775683}, Theorem 1.4.4.4; $\Omega_{1}\cup\Omega_{2}$ has a Lipschitz boundary). By duality, this multiplication maps $\mathcal{L}^{2}(\Omega_{1}\cup\Omega_{2})$ continuously into $W^{-1}(\Omega_{1}\cup\Omega_{2})$. As a result, the right hand side of \eqref{gammaest} is dominated by $\|\widetilde{\alpha}\|_{\mathcal{L}^{2}_{(0,n-1)}}(\Omega_{1}\cup\Omega_{2}) = \|\alpha\|_{\mathcal{L}^{2}_{(0,n-1)}(\Omega)}$, where $\widetilde{\alpha} = \alpha$ on $\Omega$, and zero otherwise.

Now we set 
\begin{equation}\label{betadef}
\beta_{1} := (1-\varphi)\alpha + \gamma \;;\;\beta_{2} := \varphi\alpha - \gamma \; .
\end{equation}
Then $\beta_{1}$ and $\beta_{2}$ are $\overline{\partial}$--closed, so that we have \eqref{alphaCousin}. The estimates above imply that \eqref{cousinestimate} also holds. The discussion following \eqref{cousinestimate} shows that the proof of Theorem \ref{main} is now complete.

\end{proof}

\emph{Remark}: One's first tendency would probably be to take the decomposition \eqref{betadef} and apply the compactness estimates on $\Omega_{1}$ and $\Omega_{2}$ to $\widetilde{\beta_{1}}$ and $\widetilde{\beta_{2}}$, respectively. However, derivatives of $\varphi$ blow up at $S$; as a result, $\widetilde{\beta_{1}}$ and $\widetilde{\beta_{2}}$ are not known to be in $\dom(\overline{\partial})\cap \dom(\overline{\partial}^{*})$ on the respective domains. By contrast, our approach above only requires the estimation of $\|\overline{\partial}\varphi\wedge\alpha\|_{W^{-1}_{(0,n)}(\Omega_{1}\cup\Omega_{2})}$, as in \eqref{gammaest}, rather than $\|\overline{\partial}\varphi\wedge\alpha\|_{\mathcal{L}^{2}_{(0,n)}(\Omega_{j})}$, $j=1,2$. This weaker estimate suffices because we can exploit the elliptic gain of the $\overline{\partial}$--Neumann operator on $(0,n)$--forms (which is essentially $\Delta^{-1}: W^{-1}(\Omega_{1}\cup\Omega_{2}) \rightarrow W^{1}_{0}(\Omega_{1}\cup\Omega_{2})$, as explained above) to 
recover the loss that derivatives of $\varphi$ introduce. It is this part of the argument, more than anything else, that confines us to consider only $(0,n-1)$--forms. For example, the fact that $\Omega_{1}\cup\Omega_{2}$ is not pseudoconvex should be less of an issue. In order to prove Theorem \ref{main} via our approach for $(0,q)$--forms, one has to solve $\overline{\partial}$ on $\Omega_{1}\cup\Omega_{2}$ at the level of $(q+1)$--forms. At least in the context of smooth $\overline{\partial}$--cohomology on $\Omega_{1}\cup\Omega_{2}$, the cohomology groups are trivial at levels $q\geq 2$ (this follows from a Mayer--Vietoris sequence argument; see for example \cite{MR0206335}, Proposition (3.7); this reference contains a systematic discussion of `cohomological $q$--completeness').

\section{Appendix}\label{app}

In this section, we show how to construct a partition of unity $\{\varphi, 1-\varphi\}$ on $\Omega_{1}\cup\Omega_{2}$ subordinate to the cover $\{\Omega_{1}, \Omega_{2}\}$, such that we have the estimate $|\nabla\varphi| \lesssim 1/d_{S}$. 

Define the unit vector fields $X$ and $Y$ on $S$ as follows. For $\zeta \in S$, $X(\zeta)$ is the unique unit vector perpendicular to $S$ and tangential to $\Omega_{2}$, such that $X\rho_{1}(\zeta) < 0$ (i.e. $X(\zeta)$ points inside $\Omega_{1}$). The latter is possible because derivatives of $\rho_{1}$ tangential to $\Omega_{2}$ and transverse to $S$ do not vanish,
by transversality of the intersection of $\Omega_{1}$ and $\Omega_{2}$. $Y$ is defined analogously, with the roles of $\Omega_{1}$ and $\Omega_{2}$ interchanged. Then the vector $X+Y$ points inside $\Omega$ at points of $S$. Indeed, for $\zeta \in S$, we have 
\begin{equation}\label{inside}
 (X+Y)\rho_{1}(\zeta) = X\rho_{1}(\zeta) + Y\rho_{1}(\zeta) = X\rho_{1}(\zeta) < 0 \; .
\end{equation}
We have used that $Y\rho_{1} = 0$ ($Y$ is tangential to $\Omega_{1}$). \eqref{inside} says that $X+Y$ points inside $\Omega_{1}$ at points of $S$. Similarly (or by symmetry), this vector also points inside $\Omega_{2}$, hence inside $\Omega$.

Denote by $D_{r}\subset \mathbb{C}$ the disc of radius $r$, centered at $0$. We consider a diffeomorphism $h$ from $S \times D_{r}$, for $r$ sufficiently small, onto a tubular neighborhood $V$ of $S$ (see e.g. \cite{MR0448362}, chapter 4), defined as follows:
\begin{equation}\label{diffeo}
h(\zeta,w) = \zeta + Re(w)X(\zeta) + Im(w)Y(\zeta) \; , \; \zeta\in S\,,\,w\in D_{r} \;.
\end{equation}
By continuity, there is $\alpha >0$ such that the sector of $D_{r}$ where $\pi/4-\alpha \leq arg(w) \leq \pi/4 +\alpha$, less the origin, is mapped into $\Omega$, and the opposite sector is mapped into the complement of $\overline{\Omega_{1}}\cup\overline{\Omega_{2}}$. Here, $\arg(w)$ denotes the branch of the argument with values between $-3\pi/4$ and $5\pi/4$. Choose a function $\sigma\in C^{\infty}(\mathbb{R})$ with $0 \leq \sigma \leq 1$, $\sigma \equiv 1$ on $(-\infty, \pi/4-\alpha]$, and $\sigma \equiv 0$ on $[\pi/4+\alpha, \infty)$. On $V\cap (\Omega_{1}\cup\Omega_{2})$, we 
define $\varphi$ as follows:
\begin{equation}\label{partition}
\varphi(z) = \sigma(\arg(w)) \; ,\; z=h(\zeta,w) \;.
\end{equation}
For points $(\zeta,w)\in h^{-1}(b\Omega_{2}\cap\Omega_{1}\cap V)$, $\arg(w)$ takes values in the sectors $(-3\pi/4+\alpha,\pi/4-\alpha)$, so that for these points, $\sigma(\arg(w)) = 1$ (possibly after shrinking $V$). This is because $S$ and $X(\zeta)$ span the tangent space to $b\Omega_{2}$ at $\zeta$, and $X(\zeta)$ points inside $\Omega_{1}$. Similarly, $\sigma(\arg(w)) = 0$ for points $(\zeta,w)\in h^{-1}(b\Omega_{1}\cap\Omega_{2}\cap V)$.

Arguing geometrically or directly computing one finds that $|\nabla\sigma(\arg(w))| \lesssim 1/|w|$. Because $h$ is a diffeomorphism and $|w|$ is comparable to $d_{S}$, $|\nabla\varphi|$ has the desired upper bound near $S$.

It remains to extend $\varphi$ to $\Omega_{1}\cup\Omega_{2}$. First, we extend $\varphi$ by $0$ into a (small enough) neighborhood in $\Omega_{2}$ of $\Omega_{2}\setminus\Omega_{1}$. Similarly, we extend $\varphi$ by $1$ into a neighborhood in $\Omega_{1}$ of $\Omega_{1}\setminus\Omega_{2}$. Using a suitable cutoff function, $\varphi$ so defined on these neighborhoods and $V$ can be extended from a slightly smaller set via a suitable cutoff function to obtain the function we need on $\Omega_{1}\cup\Omega_{2}$.

\bigskip\bigskip\bigskip

\addcontentsline{toc}{section}{References}
\bibliographystyle{abbrv} \bibliography{biblio}
\end{document}